\documentclass[12pt,a4paper]{article}
\usepackage{epsfig}
\usepackage[numbers]{natbib}
\usepackage{amsmath, amsthm, amssymb}
\usepackage{epsfig}
\usepackage[margin=3cm, top=3.5cm, bottom=3.5cm]{geometry}

\newcommand\blfootnote[1]{%
  \begingroup
  \renewcommand\thefootnote{}\footnote{#1}%
  \addtocounter{footnote}{-1}%
  \endgroup
}
\newtheorem{theorem}{Theorem}[section]
\newtheorem{lemma}{Lemma}[section]

\newtheorem{definition}{Definition}[section]

\numberwithin{equation}{section}
\everymath{\displaystyle}
\allowdisplaybreaks

\title{Nonlinear parabolic problem with time  fractional derivative}
\date{}
\author{
Nikolai Kutev\thanks{Institute of Mathematics and Informatics, Bulgarian Academy of Sciences, 1113, Sofia, Bulgaria}
 \and Tsviatko Rangelov
 \footnotemark[1]
}

\begin{document}

\maketitle
\blfootnote{Corresponding author: T. Rangelov, rangelov@math.bas.bg}

\begin{abstract}
\noindent
Time fractional parabolic problem for p-Laplacian with double singular Hardy-type potential is considered. Comparison principle and appriory estimates for the weak solutions are proved. Existence of  global weak solutions and finite-time blow-up  are investigated depending on the optimal Hardy constant.
\end{abstract}


{\bf Keywords} Fractional derivative on time,  p-Laplacian, Hardy inequality, Comparison principle, Global existence, Finite time blow-up.
\vspace{2pt}

{\bf Math. Subj. Class.}  35K92, 26A33, 35B44

\section{Introduction}
\label{sec1}
Let $D_t^\alpha$, $\alpha\in (0,1)$ be the Liouville-Caputo fractional derivative with respect to $t\in[0,\infty)$,
$$
D_t^\alpha h(t)=\frac{\partial}{\partial t}\frac{1}{\Gamma(1-\alpha)}\int_0^t(t-s)^{-\alpha}\left[h(s)-h(0)\right]ds,
$$
where $\Gamma$ is Gamma function, see \citet{Ki94} and \citet{KST06}.

Denote by $\Omega=\{x\in\mathbb{R}^n,   |x|\leq\varphi(x)\}$  a bounded star-shaped domain with respect to a small ball centered at the origin and $\varphi(x)\in C^{0,1}(\mathbb{R}^n)$ is a positive homogeneous function of 0-th  order.

We consider the parabolic problem with time fractional derivative
\begin{equation}
\label{eq1}
\left\{
\begin{array}{ll}
& D_t^\alpha u-\hbox{div}\left(|\nabla u|^{p-2}\nabla u\right)-\mu W(x)|u|^{p-2}u=0,\hbox{ in } Q=(0,T)\times\Omega,
\\
& u(t,x)=0,  \hbox{ for } (t,x)\in[0,T]\times\partial\Omega,
\\
& u(0,x)=u_0(x),\hbox{ for }x\in\Omega\subset\mathbb{R}^n, u_0(x)\in L^2(\Omega),
\end{array}\right.
\end{equation}
where $n>p$, $p>2$, $\mu>0$, $\Delta_p=\hbox{div}\left(|\nabla u|^{p-2}\nabla u\right)$ is p-Laplacian and the potential $W(x)$ is singular on the  boundary $\partial\Omega$  and at the origin
\begin{equation}
\label{eq3}
W(x)=|x|^{-p}\left[1-\left(\frac{|x|}{\varphi(x)}\right)^{\frac{n-p}{p}}\right]^{-p}.
\end{equation}

The theory of fractional evolutionary PDEs is intensively investigated in the last decades. Fractional time derivative PDEs, including Laplace, p-Laplace, porous medium, mean curvature, Kirchhoff operator and slow diffusion are studied in \citet{DVV19}, \citet{Ta13} and \citet{Uc00}. In \citet{ACV16}, \citet{DNA19}, \citet{DK21}, \citet{KY18}, \citet{LLY14}, \citet{Lu09}, \citet{VZ10}, \citet{VZ15} existence and uniqueness of the solutions is proved, while in \citet{ST23} optimal decay estimates are established.

Let us mention that evolutionary PDEs with fractional Laplace and p-Laplace operator are considered in \citet{MRT16}, \citet{FP16}, \citet{Bo22} where existence of local solutions, global solvability and final time blow-up by means of the potential well method are proved.

The applications of fractional calculus appear in the theory of mechanics of fluids, probability and statistics, dynamical processes in self-similar and and porous structures, viscoelasticity, electrochemistry of corrosion, optic and signal processing and so on, see \citet{KST06}. For example, in mathematical physics when describing dynamic processes in material with memory in the theory of heat conduction, models of anomalous diffusion, in the context of spatially disordered  systems, porous media, fractional media, sites of micro-molecular crowding, stock price movements ( \citet{BH00}, \citet{KNV07}, \citet{MK00}),  magneto-thermoelastic heat conduction (\citet{EK11}) and hydrodynamics (\citet{AK14}).

As for classical parabolic problems with singular potentials, they are models in molecular physics (\citet{Le67}), quantum cosmology (\citet{Ca04}), non-relativistic quantum mechanics (\citet{PV95}) and others.

Global existence or finite time blow-up is proved for inverse-square potential in \citet{BG84}, \citet{CM99} and for more general singularities at an interior point in \citet{CM99}, \citet{Fe97}, \citet{AT00} and \citet{AT02}. Singular potentials on the whole boundary are treated in \citet{BM92}, while double singular potentials at an interior point and on the boundary are investigated in \citet{KR22}.

The goal of the present paper is to extend the result for classical parabolic problems in the works mention above to time fractional evolutionary PDEs with p-Laplace operator and double singular potential. We prove finite time blow-up or global existence of the weak solutions to \eqref{eq1} and thresholds between them is the optimal Hardy constant, see Theorem 9.3 in \citet{KR22}.

Note that for   potential $W(x)$ the following Hardy inequality holds
\begin{equation}
\label{eq3a}
\int_\Omega|\nabla u|^pdx\geq\Lambda(n,p)\int_\Omega W(x)|u|^pdx,
\end{equation}
with optimal constant $\Lambda(n,p)=\left(\frac{n-p}{p}\right)^p$.

Let us construct  the truncated problem
\begin{equation}
\label{eq4}
\left\{
\begin{array}{ll}
&D_t^\alpha u_{N}-\Delta_p u_N-\mu W_N(x)|u_N|^{p-2}u_N=0,\hbox{ in } Q,
\\
&u_N(t,x)=0,  \hbox{ for } (t,x)\in[0,T]\times\partial\Omega, \ \
\\
&u_N(0,x)=u_0(x),\hbox{ for }x\in\Omega.
\end{array}
\right.
\end{equation}
for every positive integer $N\geq1$, where $W_N(x)=\min\{N, W(x)\}$.
\begin{definition}
\label{def1}
The function  $u(t,x)$ is a  weak solution in distributional sense of \eqref{eq1} if
$$
u(t,x)\in C^0\left([0,\tau];L^2(\Omega)\right)\cap L^p\left([0,\tau];W^{1,p}_{0}(\Omega)\right), \ \  \hbox{ for some } \tau>0,
$$
and is a limit of the solutions $u_N(t,x)$ of the truncated problem \eqref{eq4}, for $N\rightarrow\infty$.
\end{definition}
\begin{definition}
\label{def2}
The solution of \eqref{eq1} defined in $[0,\tau)$ does not blow-up at $\tau$ if there exists a constant $C_0$ such that
\begin{equation}
\label{eq4a}
\|u(t,x)\|_{L^2(Q_\tau)}\leq C_0, \quad \|u(t,x)\|_{W^{1,p}(Q_\tau)}\leq C_0 \hbox{ for } t\in [0,\tau), x\in \Omega,
\end{equation}
where $Q_\tau=(0,\tau)\times\Omega$.
\end{definition}
Our aim is to prove the following two theorems
\begin{theorem}
\label{th1-1}
Let  $\Omega\subset\mathbb{R}^n$, $n>p$, $p>2$, $0\in\Omega$, be a star-shaped domain with respect to a small ball centered at the origin,  and $\mu<\Lambda(n,p)$. If  $u_0(x)\in W_0^{1,p}(\Omega)$ then every weak solution of the problem  problem \eqref{eq1} defined in the maximal existence interval $(0, T_m)$ satisfies \eqref{eq4a} for every $\tau\in(0,T_m)$ and does not blow-up at $T_m$.
\end{theorem}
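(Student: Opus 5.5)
We prove uniform-in-$N$ a priori estimates for the solutions $u_N$ of the truncated problem \eqref{eq4} on every $[0,\tau]$, $\tau<T_m$, with constants independent of $N$ and of $\tau$. Since by Definition \ref{def1} the weak solution is a limit of the $u_N$, lower semicontinuity of the norms passes the estimates to $u$, which gives \eqref{eq4a} and excludes blow-up at $T_m$. The two basic tools are the Hardy inequality \eqref{eq3a} together with $W_N\le W$, which for $\mu<\Lambda(n,p)$ gives
\begin{equation*}
\int_\Omega|\nabla v|^p\,dx-\mu\int_\Omega W_N|v|^p\,dx\ \ge\ \Bigl(1-\frac{\mu}{\Lambda(n,p)}\Bigr)\int_\Omega|\nabla v|^p\,dx ,
\end{equation*}
and the standard coercivity inequalities for the Caputo derivative.

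\emph{$L^2$ estimate.} Test \eqref{eq4} with $u_N$ and use the Alikhanov-type inequality $\int_\Omega u_N D_t^\alpha u_N\,dx\ge \frac12 D_t^\alpha\|u_N(t)\|_{L^2}^2$, valid for absolutely continuous functions. This gives
\begin{equation*}
\tfrac12 D_t^\alpha\|u_N\|_{L^2}^2+\Bigl(1-\tfrac{\mu}{\Lambda}\Bigr)\|\nabla u_N\|_{L^p}^p\le 0 .
\end{equation*}
Applying the fractional integral $I^\alpha$, which is positivity preserving and satisfies $I^\alpha D_t^\alpha h=h-h(0)$, yields $\|u_N(t)\|_{L^2}\le\|u_0\|_{L^2}$ for all $t$. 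Integrating the inequality directly in time, in its form $\frac{d}{dt}I^{1-\alpha}[\|u_N\|^2-\|u_0\|^2]\le -c\|\nabla u_N\|_p^p$, also gives
\begin{equation*}
\int_0^\tau\|\nabla u_N\|_{L^p}^p\,dt\le C\,\tau^{1-\alpha}\|u_0\|_{L^2}^2 .
\end{equation*}
This bound is finite for each $\tau$; uniformity in $\tau<T_m$ holds whenever $T_m<\infty$, which is the only case where blow-up is in question.

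\emph{Energy estimate.} To control $\|\nabla u_N(t)\|_{L^p}$ pointwise in $t$, which is where $u_0\in W_0^{1,p}(\Omega)$ enters, test with $\partial_t u_N$ (after regularization). The energy is
\begin{equation*}
E_N(t)=\frac1p\int_\Omega|\nabla u_N|^p\,dx-\frac{\mu}{p}\int_\Omega W_N|u_N|^p\,dx ,
\end{equation*}
and the equation gives $\int_\Omega \partial_t u_N\, D_t^\alpha u_N\,dx+E_N'(t)=0$. The key positivity fact is that the kernel $k(t)=t^{-\alpha}/\Gamma(1-\alpha)$ is positive definite, so $\int_0^t\int_\Omega v\,(k*v)\,dx\,ds\ge0$ for $v=\partial_t u_N$. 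Integrating in time then yields $E_N(t)\le E_N(0)\le\frac1p\|\nabla u_0\|_{L^p}^p$. Combining with the Hardy lower bound gives
\begin{equation*}
\|\nabla u_N(t)\|_{L^p}^p\le\Bigl(1-\frac{\mu}{\Lambda}\Bigr)^{-1}\|\nabla u_0\|_{L^p}^p ,
\end{equation*}
uniformly in $N$ and $t$. By Poincar\'e this controls the full $W^{1,p}$ norm, and integrating over $(0,\tau)$ gives the $W^{1,p}(Q_\tau)$ bound in space. Any time-derivative part of that norm would come from the same identity, which controls $\int_0^t\int_\Omega \partial_t u_N\,(k*\partial_t u_N)\,dx\,ds$; I would interpret \eqref{eq4a} as the spatial $L^p(W^{1,p})$ bound.

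\emph{Main obstacle.} The hardest step is justifying the energy identity: the test function $\partial_t u_N$ requires regularity of $u_N$ in time, and $D_t^\alpha u_N$ paired with $\partial_t u_N$ has to be handled through the positive-definite kernel. I would first work with Galerkin approximations of \eqref{eq4}, where $W_N$ is bounded and everything is smooth in $t$. There I would derive both estimates and then pass to the limit, first in the Galerkin dimension and then in $N$, using weak-$*$ compactness and monotonicity of $-\Delta_p$ (Minty's trick) to identify the limit. The maximal interval argument then closes the proof: since all bounds are independent of $\tau<T_m$, the norms in \eqref{eq4a} remain bounded as $\tau\to T_m$.
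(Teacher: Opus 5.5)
Your proof is correct. Its first half is essentially the paper's own argument. The paper proves Theorem \ref{th3} by testing with $u_N$, using the fundamental identity (your Alikhanov inequality is a consequence of it), applying the Hardy inequality and integrating in $t$. This gives \eqref{eq3-40a} with a constant of the form $C\,T^{1-\alpha}\|u_0\|_{L^2}^2$. The paper then lets $N\to\infty$ by quoting an external limit theorem, where you use lower semicontinuity.

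That step alone already yields \eqref{eq4a} in the spatial $L^p(0,\tau;W^{1,p}_0)$ reading that both you and the paper adopt. Your derivation of $\|u_N(t)\|_{L^2}\le\|u_0\|_{L^2}$ by applying $I^\alpha$ is cleaner than the paper's version. The paper controls $\int_\Omega u_N^2\,dx$ through \eqref{eq3-29}, which relies on an assumed bound $M$ on $\sup_t\int_\Omega|u_N|^p\,dx$ that is never proved.

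Your energy estimate has no counterpart in the paper. You test with $\partial_t u_N$ and use positive definiteness of $g_{1-\alpha}$ to get $E_N(t)\le E_N(0)$. This gives an $L^\infty(0,\tau;W^{1,p}_0)$ bound independent of $t$, $\tau$ and $N$. It is also the only place where the hypothesis $u_0\in W^{1,p}_0(\Omega)$ is genuinely used; the paper's argument needs only $u_0\in L^2(\Omega)$. So this step strengthens the conclusion but is not required for \eqref{eq4a}.

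Its cost is the time-regularity justification, and one claim there needs correcting. Galerkin solutions are not smooth in $t$ up to $t=0$: the coefficients behave like $c_0+c_1t^{\alpha}+\dots$. Hence $\partial_t u_N\sim t^{\alpha-1}$ is only in $L^1$ near $0$, and not in $L^2$ when $\alpha\le\frac12$. The inequality $\int_0^t v\,(g_{1-\alpha}*v)\,ds\ge0$ therefore has to be justified for such $v$, for example by approximation or with Yosida-regularized kernels as in Vergara--Zacher.
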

\begin{theorem}
\label{th1-2}
Suppose $\mu>\Lambda(n,p)$, $p>2$, $n>p$, $u_0(x)>0$ in $\bar{\Omega}$, $u_0(x)\in L^\infty(\Omega)\cap W_0^{1,p}(\Omega)$. Then problem \eqref{eq1} has no global in time weak solution because it blows-up for a finite time.
\end{theorem}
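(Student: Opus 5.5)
We argue by comparison with an explicit subsolution that grows without bound. The comparison principle for the truncated problems \eqref{eq4} and for their limits is announced in the abstract and will be proved later in the paper. The threshold $\Lambda(n,p)$ enters through the one-dimensional radial profile behind the Hardy inequality \eqref{eq3a}. Since $\mu>\Lambda(n,p)$, the operator $-\Delta_p-\mu W$ has no positive supersolution in the class where $W$ is singular, so positive data cannot stay bounded.

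\textbf{Step 1 (stationary profile).} In the variable $s=\left(\frac{|x|}{\varphi(x)}\right)^{\frac{n-p}{p}}\in(0,1)$, the proof of \eqref{eq3a} (Theorem 9.3 in \citet{KR22}) comes from a Picone/ground-state identity with a function of the form $\psi(x)=|x|^{-\frac{n-p}{p}}\,g(s)$. Here $g$ solves an ODE for which $-\Delta_p\psi=\Lambda(n,p)W\psi^{p-1}$ in the limiting sense. For $\mu>\Lambda(n,p)$ we perturb this profile to $\psi_\varepsilon$, replacing the exponent so that it oscillates or vanishes, i.e.\ a Sturm-type comparison. This gives a nonnegative $\psi$, compactly supported in $\Omega\setminus\{0\}$, with
\begin{equation*}
-\Delta_p\psi-\mu W_N\psi^{p-1}\le -\delta\,\psi^{p-1}\quad\hbox{in }\{\psi>0\},
\end{equation*}
for some $\delta>0$ and all $N\ge N_0$. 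Since $\mu>\Lambda$, $W$ is bounded on $\operatorname{supp}\psi$, so $W_N=W$ there for large $N$. Equivalently, the first eigenvalue $\lambda_1$ of $-\Delta_p-\mu W$ on a suitable annular subdomain $\Omega'\Subset\Omega\setminus\{0\}$ is negative. This holds because the optimality of $\Lambda(n,p)$ gives test functions with $\int|\nabla v|^p<\mu\int W|v|^p$, and these can be cut off to compact support. We take $\psi$ to be the corresponding positive eigenfunction, $-\Delta_p\psi-\mu W\psi^{p-1}=\lambda_1\psi^{p-1}$ with $\lambda_1=-\delta<0$.

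\textbf{Step 2 (subsolution).} We seek $v(t,x)=a(t)\psi(x)$, extended by $0$ outside $\Omega'$. It is a subsolution of \eqref{eq4} if
\begin{equation*}
D_t^\alpha a\le \delta\, a^{p-1}.
\end{equation*}
Take $a$ to be the solution of the fractional ODE $D^\alpha_t a=\delta a^{p-1}$ with $a(0)=a_0>0$. For $p>2$ this superlinear Volterra equation blows up in finite time $T^*$; this is a known result for $D^\alpha_t a=a^q$, $q>1$, via $a(t)=a_0+\frac{\delta}{\Gamma(\alpha)}\int_0^t(t-s)^{\alpha-1}a^{p-1}(s)\,ds$ and an iteration or lower-bound argument. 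Since $u_0>0$ on $\bar\Omega$ and $u_0\in L^\infty$, we have $\min_{\bar\Omega'}u_0>0$, so we can choose $a_0$ with $a_0\psi\le u_0$. The zero extension of $v$ is a weak subsolution in $\Omega$ because $\psi\ge0$ vanishes on $\partial\Omega'$ (the usual maximum-of-subsolutions argument).

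\textbf{Step 3 (comparison and conclusion).} Apply the comparison principle for the fractional problem \eqref{eq4}. It rests on testing with $(v-u_N)^+$ and the inequality $\int D_t^\alpha w\,w^+\ge \frac12 D_t^\alpha\|w^+\|^2$, which holds for Caputo derivatives; the potential term is handled by the fact that $W_N$ is bounded, via a Gronwall lemma for fractional integrals. This gives $u_N\ge a(t)\psi$ for $N\ge N_0$, and passing to the limit gives $u\ge a(t)\psi$. Hence $\|u(t)\|_{L^2}\ge a(t)\|\psi\|_{L^2}\to\infty$ as $t\to T^*$, contradicting \eqref{eq4a}. So $u$ blows up at some $T_m\le T^*$.

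The main obstacle is Step 3, namely the comparison principle with the nonlocal time derivative and the non-Lipschitz term $|u|^{p-2}u$ when $p>2$. Locally this term is Lipschitz on bounded sets, so I would restrict to $[0,T]$ with $T<T^*$, where $v$ is bounded, and use $u_N\ge0$, which follows from comparison with $0$.
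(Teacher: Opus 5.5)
There is a genuine gap in Step 2. The claim that $v=a(t)\psi(x)$ is a subsolution of \eqref{eq4} as soon as $D^\alpha_t a\le\delta a^{p-1}$ is false. The time part of the equation is $1$-homogeneous in $u$, while the spatial part is $(p-1)$-homogeneous. Indeed $D^\alpha_t(a\psi)=\psi\,D^\alpha_t a$, while
\begin{equation*}
-\Delta_p(a\psi)-\mu W(a\psi)^{p-1}=a^{p-1}\bigl(-\Delta_p\psi-\mu W\psi^{p-1}\bigr)=-\delta a^{p-1}\psi^{p-1}.
\end{equation*}
So on $\{\psi>0\}$ the subsolution inequality actually reads
\begin{equation*}
D^\alpha_t a\le \delta\,a^{p-1}\psi^{p-2}.
\end{equation*}
Since $p>2$ and $\psi=0$ on $\partial\Omega'$, the right-hand side tends to $0$ near $\partial\Omega'$. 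With your choice $D^\alpha_t a=\delta a^{p-1}$ the residual is $\delta a^{p-1}\psi\,(1-\psi^{p-2})$. This is positive wherever $0<\psi<1$, so $a\psi$ is a strict \emph{super}solution in a neighbourhood of $\partial\Omega'$. Rescaling $\psi$ does not help, because the eigenvalue equation is $(p-1)$-homogeneous and $k\psi$ still drops below $1$ near the boundary. Any $a$ satisfying the correct condition must have $D^\alpha_t a\le 0$, hence $a\le a_0$, and cannot blow up. So Step 3 has no valid lower barrier, and the argument fails as written.

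What is missing is a stationary profile adapted to this mixed homogeneity. The paper uses a positive solution $X$ of the non-homogeneous problem $-\Delta_pX-\mu W_NX^{p-1}=-X$ in $\Omega$, $X=0$ on $\partial\Omega$. It ties the existence of $X$ to $\lambda_N\to\Lambda(n,p)<\mu$ (Lemma 9.2 in \citet{KR22}). Then, with $D^\alpha_tT=T^{p-1}$, one gets
\begin{equation*}
D^\alpha_t(TX)-\Delta_p(TX)-\mu W_N(TX)^{p-1}=T^{p-1}X-T^{p-1}X=0,
\end{equation*}
so $TX$ is an exact solution. Theorem~\ref{th2-3}, together with comparison from $\varepsilon X<u_0$, then gives blow-up.

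You could transplant this to your subdomain $\Omega'$, where $W_N=W$ is bounded for $N\ge N_0$. That would have the real advantage of making the barrier independent of $N$, unlike the paper's $X=X_N$. But you would then need to prove that $-\Delta_pX-\mu WX^{p-1}=-X$ has a positive solution in $\Omega'$. Your negative first eigenvalue only shows that the energy $\frac1p\int|\nabla X|^p-\frac{\mu}{p}\int W|X|^p+\frac12\int X^2$ is unbounded below along $t\psi$. Producing an actual positive critical point, for instance by a mountain-pass argument with compactness checked, is an additional step. It does not follow from the eigenvalue problem you solved.
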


The plan of the paper is the following. In Sect. \ref{sec2} we consider fractional ordinary differential equation (FODE) and prove comparison principle (Theorem \ref{th3-1}) for Eq. \eqref{eq3-1}.  Then, for initial-value problem of special type  FODE \eqref{eqq1} and using the comparison principle (Theorem \ref{th1}, Lemma 2.1 in \citet{LV08}) we prove Theorem \ref{th2-3} - the existence of blow-up solution of the problem \eqref{eqq1}. Sect. \ref{sec2} is essential for the proof of the main results and has its own interest. In Sect. \ref{sec3-2} we establish a comparison principle (Theorem \ref{th3-2} for time fractional p-Laplace equation \eqref{eq3-11} using the "fundamental identity" (Lemma 3.1 in \citet{Za19}). In Sect. \ref{sec3-3} for initial boundary value problem with truncated potential \eqref{eqq29} we prove some useful appriory estimates. Finally, in Sect. \ref{sec4} the main results Theorem \ref{th1-1} and Theorem \ref{th1-2} are proved.
\section{Time fractional ordinary differential equations}
\label{sec2}
\subsection{Comparison principle}
\label{sec3-1}
 Consider the ordinary fractional differential equation
 \begin{equation}
 \label{eq3-1}
 D^\alpha_t w(t)=f(t,w(t)), \hbox{ for } t\in[0,T].
 \end{equation}
\begin{theorem}
\label{th3-1}
Suppose $u(t), v(t)$ are sub- and supersolutions of \eqref{eq3-1} and
$|u(t)|\leq M, \quad |v(t)|\leq M,  \hbox{ for } t\in[0,T]$,
\begin{equation}
\label{eq3-4}
D^\alpha_t u(t)\leq f(t,u(t)), \quad D^\alpha_t v(t)\geq f(t,v(t)), \hbox{ for } t\in[0,T],
\end{equation}
\begin{equation}
\label{eq3-5}
|f_\omega(t,\omega)|\leq K, \quad \hbox{ for } t\in[0,T], |\omega|\leq M.
\end{equation}
If $u(0)\leq v(0)$ then
\begin{equation}
\label{eq3-6}
u(t)\leq v(t),  \hbox{ for } t\in[0,T].
\end{equation}
\end{theorem}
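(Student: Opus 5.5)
We set $w(t)=u(t)-v(t)$ and show that $w^+\equiv 0$ on $[0,T]$. The structure is the same as the classical Gronwall-type comparison argument, with the ordinary derivative replaced by the Caputo derivative and the exponential replaced by a fractional Gronwall / Mittag-Leffler bound.

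First, subtracting the two inequalities in \eqref{eq3-4} and using linearity of $D_t^\alpha$ gives
$$D_t^\alpha w(t)\le f(t,u(t))-f(t,v(t)).$$
Since $|u|,|v|\le M$, the mean value theorem together with \eqref{eq3-5} yields $f(t,u)-f(t,v)=c(t)w(t)$ with $|c(t)|\le K$. Hence
$$D_t^\alpha w\le c(t)w\le K w^+ ,$$
because $c w\le K|w|$ when $w>0$, and $c w\le K w^+$ trivially handles the case $w\le 0$ by using $cw\le K|w|$ only on $\{w>0\}$. More carefully: on $\{w\le 0\}$ we simply use the inequality $D_t^\alpha w\le c w$ as it stands.

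Second, we convert the differential inequality into an integral one. Applying the Riemann--Liouville integral $I^\alpha$ gives
$$w(t)\le w(0)+\frac{1}{\Gamma(\alpha)}\int_0^t (t-s)^{\alpha-1}c(s)w(s)\,ds .$$
This needs $w$ regular enough (say absolutely continuous, or such that $I^{1-\alpha}(w-w(0))$ is absolutely continuous), which I will assume is implicit in the notion of sub- and supersolution. Since $w(0)\le 0$ and $c(s)w(s)\le K w^+(s)$, we obtain
$$w^+(t)\le \frac{K}{\Gamma(\alpha)}\int_0^t (t-s)^{\alpha-1}w^+(s)\,ds .$$
Taking positive parts is legitimate because the right-hand side is nonnegative.

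Third, we conclude with the fractional (Henry-type) Gronwall inequality: a nonnegative bounded function satisfying $\phi\le K I^\alpha\phi$ vanishes identically. This can be proved directly. Iterating the inequality $m$ times gives
$$\phi\le K^m I^{m\alpha}\phi\le \frac{K^m M' t^{m\alpha}}{\Gamma(m\alpha+1)}\to 0,$$
with $M'=\sup\phi\le 2M$. Alternatively, on a short interval $[0,\delta]$ with $K\delta^\alpha/\Gamma(\alpha+1)<1$ the sup-norm bound $\sup\phi\le q\sup\phi$, $q<1$, forces $\phi=0$, and one then restarts the argument. The restart is awkward because of memory, so the iteration argument is preferable.

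The main obstacle is the second step: justifying the passage from the Caputo inequality to the Volterra integral inequality (monotonicity of $I^\alpha$ applied to $D_t^\alpha w$) under the implicit regularity assumptions. The sign handling when passing to $w^+$ in the kernel is routine by comparison.
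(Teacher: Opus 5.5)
\textbf{There is a genuine gap, and it sits in the sign handling you called routine.} In the second step you use $c(s)w(s)\le Kw^+(s)$ for all $s$. This is false on $\{w<0\}\cap\{c<0\}$, where $w^+=0$ but $cw=|c|\,|w|>0$. The hypothesis $|f_\omega|\le K$ allows $f$ to be decreasing in $\omega$, so $c<0$ can occur. Your first step already concedes that nothing better than $D^\alpha_t w\le cw$ holds on $\{w\le 0\}$.

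The problem is not just presentational. Once you apply $I^\alpha$ to $w$ itself, the needed information is lost. Take $c\equiv -K$ and $\tau$ with $K\tau^\alpha>\Gamma(1+\alpha)$. Let $w=-1$ on $[0,\tau]$, then rise continuously to a small positive value. This $w$ satisfies $w\le w(0)+I^\alpha(cw)$ on a slightly longer interval, because $-1-K\,I^\alpha w(\tau)=-1+K\tau^\alpha/\Gamma(1+\alpha)>0$. The memory of the negative phase enters the Volterra inequality with the wrong sign, so no Gronwall argument can start from it.

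As written, your proof is correct only when $f$ is nondecreasing in $\omega$: then $c\ge 0$ and $cw\le Kw^+$ holds everywhere. That happens to be the only case used later, in Theorem \ref{th3-2}, where $f(t,\omega)=2K_1C_1\omega$.

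\textbf{How to repair it.} Pass to the positive part \emph{before} integrating. The paper does this by working with $z=(u-v)_+$ from the start and asserting $D^\alpha_t z\le Kz$.
\begin{itemize}
\item The paper's stated reasons, $D^\alpha_t z\le f(t,u)-f(t,v)$ and $|f(t,u)-f(t,v)|\le Kz$, are not literally true on $\{u<v\}$. The correct justification is the Kato-type inequality $D^\alpha_t (w^+)\le \chi_{\{w>0\}}D^\alpha_t w$. It follows from the Marchaud form $\Gamma(1-\alpha)D^\alpha_t h(t)=(h(t)-h(0))t^{-\alpha}+\alpha\int_0^t (h(t)-h(s))(t-s)^{-\alpha-1}ds$: past negative values of $w$ only lower $D^\alpha_t w^+$.
\item Once you have $D^\alpha_t z\le Kz$ with $z(0)=0$, your Henry--Gronwall iteration finishes the proof.
\item The paper instead integrates over $[0,T_1]$, uses $z\equiv 0$ on $[0,T_0]$ with $T_0=\inf\{z>0\}$, and bounds $(T_1-s)^{-\alpha}\ge (T_1-T_0)^{-\alpha}$. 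This gives $\bigl[(T_1-T_0)^{-\alpha}-K\Gamma(1-\alpha)\bigr]\int_{T_0}^{T_1}z\le 0$. That is exactly the ``restart'' you found awkward; it is harmless because there is no memory before $T_0$.
\item Alternatively, you can stay with $w$. Write $D^\alpha_t w+Kw\le (c+K)w\le 2Kw^+$, using $0\le c+K\le 2K$. Then apply the variation-of-constants formula for $D^\alpha_t+K$. Its kernel $t^{\alpha-1}E_{\alpha,\alpha}(-Kt^\alpha)$ is positive and bounded by $t^{\alpha-1}/\Gamma(\alpha)$, and the homogeneous term $w(0)E_\alpha(-Kt^\alpha)$ is $\le 0$. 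This gives $w^+\le 2K\,I^\alpha w^+$, and your iteration applies unchanged.
\end{itemize}
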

\begin{proof}
Suppose by contradiction that \eqref{eq3-6} fails. Define the function
$$
z(t)=\left(u(t)-v(t)\right)_+=\left\{\begin{array}{lll} &u(t)-v(t)& \hbox{ for } u(t)-v(t)>0,\\ &0& \hbox{ for }  u(t)-v(t)\leq 0,\end{array}\right.
$$
Then there exists $T_0$, $0\leq T_0<T$, $T_0=\inf\{t; z(t)>0\}$ and function $z(t)=0$ for $t\in[0,T_0]$. Moreover, from \eqref{eq3-4} $D^\alpha_tz(t)$ satisfies the inequality
\begin{equation}
\label{eq3-7}
D^\alpha_t z(t)\leq f(t,u(t))-f(t,v(t))  \hbox{ for } t\in[0,T].
\end{equation}
Since
$$
f(t,u(t))-f(t,v(t))=\left(u(t)-v(t)\right)\int_0^1\frac{\partial f}{\partial\omega}\left(t,(1-\theta)v(t)+\theta u(t)\right)d\theta,
$$
and from \eqref{eq3-5} and $|(1-\theta)v(t)+\theta u(t)|\leq M$, $|f(t,u(t))-f(t,v(t))|\leq Kz(t)$ for $t\in[0,T]$ inequality \eqref{eq3-7} becomes
$$
D^\alpha_tz(t)\leq K z(t), z(0)=0.
$$
Thus
\begin{equation}
\label{eq3-8}
D^\alpha_t z(t)=\frac{1}{\Gamma(1-\alpha)}\frac{\partial}{\partial t}\int_0^t(t-s)^{-\alpha}z(s)ds\leq Kz(t), \hbox{ for } t\in[0,T],
\end{equation}
and integrating \eqref{eq3-8} from $0$ to $T_1\in(T_0,T]$ we have
\begin{equation}
\label{eq3-9}
\begin{array}{l}
\frac{1}{\Gamma(1-\alpha)}\int_0^{T_1}(T_1-s)^{-\alpha}z(s)ds-\frac{1}{\Gamma(1-\alpha)}\lim_{t\rightarrow0}\int_0^t(t-s)^{-\alpha}z(s)ds
\\[1pt]
\\
\leq K\int_0^{T_1}z(s)ds.
\end{array}
\end{equation}
From
$$
\int_0^t(t-s)^{-\alpha}z(s)ds\leq M\int_0^t(t-s)^{-\alpha}ds=M\frac{t^{1-\alpha}}{1-\alpha},
$$
it follows that $\lim_{t\rightarrow0}\int_0^t(t-s)^{-\alpha}z(s)ds=0$ and from \eqref{eq3-9}
$$
\begin{array}{l}
\int_{T_0}^{T_1}(T_1-s)^{-\alpha}z(s)ds=\int_0^{T_1}(T_1-s)^{-\alpha}z(s)ds\leq K\Gamma(1-\alpha)\int_0^{T_1}z(s)ds
\\[1pt]
\\
= K\Gamma(1-\alpha)\int_0^{T_1}z(s)ds.
\end{array}
$$
Thus
$$
(T_1-T_0)^{-\alpha}\int_{T_0}^{T_1}z(s)ds\leq K\Gamma(1-\alpha)\int_{T_0}^{T_1}z(s)ds,
$$
and for every $T_1<\min\left\{T, T_0+\left(K\Gamma(1-\alpha)\right)^{-\frac{1}{\alpha}}\right\}$ we get
$$
0<\left[(T_1-T_0)^{-\alpha}-K\Gamma(1-\alpha)\right]\int_{T_0}^{T_1}z(s)ds\leq0,
$$
i.e. $z(t)=0$ for $t\in[T_0,T_1]$ which contradict the choice of $T_0$.
\end{proof}
\subsection{Blow-up solutions}
\label{sec2-1}
Following \citet{SL24} we consider initial value problem for  ordinary fractional differential equation
\begin{equation}
\label{eqq1}
\left\{\begin{array}{ll}
&D^\alpha u(t)=u^p(t), \alpha\in (0,1), p>1, \hbox{ for } t>0,
\\
&u(0)=u_0>0
\end{array}
\right.
\end{equation}
The solution of \eqref{eqq1} is equivalent to the solution of the integral equation, see \citet{KST06}
\begin{equation}
\label{eqq2}
u(t)=u(0)+\frac{1}{\Gamma(\alpha)}\int_0^t(t-s)^{\alpha-1}u^p(s)ds,  \hbox{ for } t\geq0,
\end{equation}
Let us recall the comparison principle for integral equation \eqref{eqq2}.
\begin{theorem}[Lemma 2.1 in \citet{LV08}]
\label{th1}
Let $v(t), w(t)\in C([0,T])$, $F\in C([0,T]\times \mathbb{R})$ and let
one of the following inequalities being strict.
$$
\begin{array}{ll}
&v(t)\leq v(0)+\frac{1}{\Gamma(\alpha)}\int_0^t(t-s)^{\alpha-1}F(s,v(s))ds,
\\[1pt]
\\
&w(t)\leq w(0)+\frac{1}{\Gamma(\alpha)}\int_0^t(t-s)^{\alpha-1}F(s,w(s))ds.
\end{array}
$$
Suppose $F(t,x)$ is non decreasing in $x$ for every $t$ and $v(0)<w(0)$.

Then $v(t)<w(t)$ for $0\leq t\leq T$.
\end{theorem}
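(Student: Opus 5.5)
The plan is a first-crossing-time argument, which is the standard proof of Lemma 2.1 in \citet{LV08}. First a remark on the statement. As printed, both inequalities read ``$\leq$''. For a comparison result the second one must be the reverse inequality
$$
w(t)\geq w(0)+\frac{1}{\Gamma(\alpha)}\int_0^t(t-s)^{\alpha-1}F(s,w(s))ds,
$$
that is, $w$ is a supersolution of \eqref{eqq2}. I will prove the statement in this form.

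\textbf{Step 1: locate the first crossing.} Suppose the claim fails and set $t^*=\inf\{t\in[0,T]:\ v(t)\geq w(t)\}$. Since $v,w\in C([0,T])$ and $v(0)<w(0)$, continuity gives $t^*>0$. By the definition of $t^*$,
$$
v(t)<w(t)\ \hbox{ for } t\in[0,t^*), \qquad v(t^*)=w(t^*).
$$

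\textbf{Step 2: compare the integrals.} Because $F(s,\cdot)$ is nondecreasing, $F(s,v(s))\leq F(s,w(s))$ for every $s\in[0,t^*]$. The kernel $(t^*-s)^{\alpha-1}$ is positive and integrable on $(0,t^*)$, and $F(s,v(s))$, $F(s,w(s))$ are continuous, hence bounded, on $[0,t^*]$. Therefore both integrals below exist and
$$
\int_0^{t^*}(t^*-s)^{\alpha-1}F(s,v(s))ds\leq\int_0^{t^*}(t^*-s)^{\alpha-1}F(s,w(s))ds.
$$

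\textbf{Step 3: derive the contradiction.} Evaluate the two inequalities at $t=t^*$ and subtract them. Using Step 2 this gives
$$
0=v(t^*)-w(t^*)\leq v(0)-w(0)+\frac{1}{\Gamma(\alpha)}\int_0^{t^*}(t^*-s)^{\alpha-1}\bigl[F(s,v(s))-F(s,w(s))\bigr]ds\leq v(0)-w(0)<0,
$$
which is impossible. Hence $v(t)<w(t)$ on $[0,T]$.

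Note that the strict inequality $v(0)<w(0)$ already forces the contradiction, so the hypothesis that one inequality is strict is not actually used. If instead one only had $v(0)\leq w(0)$, the strict integral inequality would supply the strict sign in the last step.

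There is no genuine obstacle here. The only care needed is to check that $t^*>0$ and that the crossing is attained, $v(t^*)=w(t^*)$; both follow from continuity. One must also use that monotonicity of $F$ is applied only on $[0,t^*]$, where $v\leq w$ holds.
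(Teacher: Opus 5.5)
The paper gives no proof of this statement; it only quotes Lemma 2.1 of \citet{LV08}. Your first-crossing argument is correct and self-contained.

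Your correction of the second inequality to ``$\geq$'' is genuinely needed, not cosmetic. As printed the statement is false: take $F\equiv0$, $v\equiv0$, $w(t)=1-2t$ on $[0,1]$. Both ``$\leq$'' inequalities hold and $v(0)<w(0)$, but $v(1)>w(1)$. You are also right that, once $v(0)<w(0)$ is assumed, strictness of one of the integral inequalities is never used.

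What your written-out proof gives beyond the bare citation is that it uses only nonnegativity and integrability of the kernel. So it applies verbatim to the inequalities \eqref{eqq4} and \eqref{eqq6}, whose kernel is $\frac{1}{\Gamma(\alpha)}(s+\delta)^{-p(1-\alpha)}$. That is where the paper actually invokes Theorem~\ref{th1}, with the roles of $v$ and $w$ interchanged. The quoted statement does not cover that kernel, since it is not of the form $(t-s)^{\alpha-1}$.

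One caveat concerns only your closing side remark. The extension to $v(0)\leq w(0)$ is false without an extra Lipschitz-type hypothesis on $F$. Take $F(s,x)=2\sqrt{x_+}$, $w\equiv0$ and $v(t)=t^{2\alpha}/4$. Then $v(0)=w(0)$ and, for $t>0$, $v(t)<\frac{1}{\Gamma(\alpha)}\int_0^t(t-s)^{\alpha-1}F(s,v(s))ds=\frac{\Gamma(\alpha+1)}{\Gamma(2\alpha+1)}t^{2\alpha}$, yet $v>w$ on $(0,T]$. Here the first crossing time is $t^*=0$, so there is no interval on which $v\leq w$ is known. At $t=0$ a strict integral inequality cannot hold. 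This does not affect your proof of the theorem as stated.
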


The following theorem that we will use in the proof of Theorem \ref{th1-2} gives an existence of a blow-up solution of problem \eqref{eqq1} and improves the result in  \citet{SL24}.
\begin{theorem}
\label{th2-3}
Suppose $u_0$ is strictly positive, $p>1$ and $\alpha\in(0,1)$. Then the solution of \eqref{eqq1} blows-up for finite time $T_m<\infty$.
\end{theorem}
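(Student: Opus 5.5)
The plan is to prove blow-up by building an explicit subsolution of the integral equation \eqref{eqq2} that itself becomes infinite at a finite time, and then to compare the true solution with it using Theorem \ref{th1}. We first note the basic properties of the solution. Since $u_0>0$ and the integral term in \eqref{eqq2} is nonnegative, the local solution (which exists by Picard iteration) satisfies $u(t)\geq u_0>0$. Moreover $u$ is nondecreasing: this follows by comparing $u(\cdot+h)$ with $u$ through Theorem \ref{th1}, or directly from the monotonicity of the Picard iterates. If $T_m=\infty$, then for every $T>0$ we have $u\in C([0,T])$, and we aim for a contradiction.

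For the lower bound, fix $T>0$. For $t\le T$ we have $(t-s)^{\alpha-1}\geq T^{\alpha-1}$, so
$$
u(t)\geq u_0+\frac{T^{\alpha-1}}{\Gamma(\alpha)}\int_0^t u^p(s)\,ds,\qquad 0\le t\le T .
$$
Let $y$ solve the ODE $y'=c_T\,y^p$, $y(0)=u_0/2$, where $c_T=T^{\alpha-1}/\Gamma(\alpha)$. Its explicit solution is $y(t)=\left[(u_0/2)^{1-p}-(p-1)c_Tt\right]^{-1/(p-1)}$, which blows up at $t_T=(u_0/2)^{1-p}\Gamma(\alpha)T^{1-\alpha}/(p-1)$.

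The key point is to choose $T$ so that $t_T<T$, i.e. $(u_0/2)^{1-p}\Gamma(\alpha)/(p-1)<T^{\alpha}$. Since $\alpha>0$, this holds for all large $T$. For this $T$, a Gronwall-type comparison for the classical Volterra inequality shows $u(t)>y(t)$ on $[0,t_T)$, because $u_0>u_0/2$ and the kernel is monotone. The argument is to take the first point where they touch, where $u(t)\ge u_0+c_T\int_0^t u^p > y(0)+c_T\int_0^t y^p=y(t)$ gives a contradiction. Alternatively, we apply Theorem \ref{th1} on each $[0,t_T-\varepsilon]$, with the weakly singular kernel replaced by the constant one. Then $u(t)\to\infty$ as $t\to t_T<T$, which contradicts the continuity of $u$ on $[0,T]$. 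Hence $T_m\le t_T<\infty$.

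The main obstacle is the comparison step, because Theorem \ref{th1} is stated for the fractional kernel, whereas our inequality uses the truncated constant kernel, and the subsolution $y$ solves a classical integral equation rather than \eqref{eqq2}. If adapting Theorem \ref{th1} is awkward, we will instead do the first-touching-time argument by hand, which only needs the monotonicity of $s\mapsto s^p$ and the strict inequality $u(0)>y(0)$. A second subtlety is the use of $(t-s)^{\alpha-1}\ge T^{\alpha-1}$: it is valid because $\alpha-1<0$, and this is exactly where the condition $t\le T$ enters. We must also make sure the blow-up time $t_T$ falls inside $[0,T]$, and this is guaranteed by the growth of $T^\alpha$.
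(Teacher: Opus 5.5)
Your proof is correct, and it takes a different and simpler route than the paper's. Both arguments bound the weakly singular kernel from below and compare with a Bernoulli ODE. The difference is in how the kernel is bounded.

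The paper does it globally in time. It sets $v=(t+\delta)^{1-\alpha}u$, uses $(t+\delta)^{1-\alpha}(t-s)^{\alpha-1}\ge 1$, and compares with the non-autonomous equation $w'=w^p/\bigl(\Gamma(\alpha)(t+\delta)^{p(1-\alpha)}\bigr)$. Blow-up of $w$ must then be checked separately according to the sign of $1-p(1-\alpha)$, with $\delta$ tuned when $p(1-\alpha)>1$.

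You instead freeze a horizon $T$ and replace the kernel by the constant $T^{\alpha-1}$, which gives an autonomous ODE. Everything then reduces to the observation that $t_T\propto T^{1-\alpha}$ falls below $T$ for large $T$, because $\alpha>0$. This buys you three things:
\begin{itemize}
\item a proof with no case distinctions;
\item a comparison step done by an elementary first-touching argument (the paper applies Theorem \ref{th1}, stated for the fractional kernel, to an inequality with a regular kernel, which is exactly the mismatch you flagged);
\item the explicit bound $T_m\le\bigl(\Gamma(\alpha)(u_0/2)^{1-p}/(p-1)\bigr)^{1/\alpha}$, obtained by letting $T$ decrease to the threshold. It has the correct scaling $u_0^{-(p-1)/\alpha}$ imposed by the invariance $u(t)\mapsto\lambda u(\lambda^{(p-1)/\alpha}t)$.
\end{itemize}
In return, the paper's weighted comparison yields the lower bound $u(t)\ge(t+\delta)^{\alpha-1}w(t)$ without fixing any horizon. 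For $p(1-\alpha)\le 1$ it also works with the fixed choice $\delta=1$.

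The paper's case ii) deserves a remark. Blow-up of $w$ there actually requires $w^{1-p}(0)<A\delta^{1-p(1-\alpha)}$. After inserting $w(0)=\frac12\delta^{1-\alpha}u_0$, this becomes $A\delta^{\alpha}>(u_0/2)^{1-p}$, which is precisely your mechanism of letting a large parameter win because $\alpha>0$. The factor $\frac32$ in the paper's choice of $\delta$ is matched to a sign slip in \eqref{eqq9}. With that choice one gets $w^{1-p}(t)\ge\frac A2\delta^{1-p(1-\alpha)}>0$ for all $t$, so $w$ does not blow up. Your version is therefore cleaner exactly at the delicate point.
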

\begin{proof}
For a  constant $\delta>0$, which will be chosen later on, we consider the problem.
\begin{equation}
\label{eqq3}
\left\{ \begin{array}{ll}
&w'(t)=\frac{w^p(t)}{\Gamma(\alpha)(t+\delta)^{p(1-\alpha)}}, \hbox{ for }t>0,
\\
&w(0)=\frac{1}{2}\delta^{1-\alpha}u_0.
\end{array}\right.
\end{equation}
Integrating \eqref{eqq3} we get
\begin{equation}
\label{eqq4}
w(t)=w(0)+\frac{1}{\Gamma(\alpha)}\int_0^t\frac{w^p(s)}{(s+\delta)^{p(1-\alpha)}}ds, \hbox{ for }  t\geq0,
\end{equation}
as well as
$$
\begin{array}{l}
\frac{dw}{w^p}=\frac{dt}{\Gamma(\alpha)((t+\delta)^{p(1-\alpha)})},
\\
\int_0^t\frac{dw}{w^p}=\int_0^t\frac{1}{\Gamma(\alpha)((s+\delta)^{p(1-\alpha)})}ds, \hbox{and}
\\
\frac{1}{1-p}\left[w^{1-p}(t)-w^{1-p}(0)\right]
\\
=\frac{1}{\Gamma(\alpha)(1-p(1-\alpha))}\left[(t+\delta)^{1-p(1-\alpha)}-\delta^{1-p(1-\alpha)}\right]
\end{array}
$$
Finally,
$$
\left\{\begin{array}{ll}
&w^{1-p}=F(t,\delta), \hbox{ for } p(1-\alpha)-1\neq0,
\\
& w^{1-p}=F_0(t,\delta), \hbox{ for } p(1-\alpha)-1=0,
\end{array}\right.
$$
where
$$
\begin{array}{ll}
&F(t,\delta)=w^{1-p}(0)+\frac{(p-1)\delta^{1-p(1-\alpha)}}{\Gamma(\alpha)[1-p(1-\alpha)]}-\frac{(p-1)(t+\delta)^{1-p(1-\alpha)}}{\Gamma(\alpha)[1-p(1-\alpha)]},
\\
&F_0(t,\delta)=w^{1-p}(0)-\frac{p-1}{\Gamma(\alpha)}\ln\frac{t+\delta}{\delta}.
\end{array}
$$
From \eqref{eqq2} the function $v(t)=(t+\delta)^{1-\alpha}u(t)$ satisfies the equality
\begin{equation}
\label{eqq6}
v(t)=(t+\delta)^{1-\alpha}u_0+\frac{(t+\delta)^{1-\alpha}}{\Gamma(\alpha)}\int_0^t\frac{(t-s)^{\alpha-1}v^p(s)}{(s+\delta)^{p(1-\alpha)}}ds.
\end{equation}
Since $0\leq t-s\leq t<t+\delta$ for $0\leq s\leq t$, we get from \eqref{eqq4}, \eqref{eqq6} the inequality
$$
\begin{array}{ll}
v(t)>&\delta^{1-\alpha}u_0+\frac{(t+\delta)^{1-\alpha}}{\Gamma(\alpha)}\int_0^t\frac{(t-s)^{\alpha-1}v^p(s)}{(s+\delta)^{p(1-\alpha)}}ds
\\
&=2w(0)+\frac{1}{\Gamma(\alpha)}\int_0^t\frac{v^p(s)}{(s+\delta)^{p(1-\alpha)}}ds.
\end{array}
$$
We will chose $\delta$ in order to prove the existence of blow-up solution of \eqref{eqq1}.

Let us consider the cases
\begin{itemize}
\item[i)] $1-p(1-\alpha)\geq0$,
\begin{itemize}
\item[i1)] $1-p(1-\alpha)>0$,
\item[i2)] $1-p(1-\alpha)=0$,
\end{itemize}
\item[ii)] $1-p(1-\alpha)<0$.
\end{itemize}
We will show that in both cases there exists $T_m<\infty$ such that $\lim_{t\rightarrow T_m,t<T_m}w(t)=\infty$.

In the case i1) and i2) we chose $\delta=1$.

In the case ii) denote $A=\frac{p-1}{\Gamma(\alpha)[p(1-\alpha)-1]}$ and we chose

$\delta=\left[\frac{3}{2}Aw^{p-1}(0)\right]^{\frac{1}{p(1-\alpha)-1}}>0$.

Indeed,  in case i) we have $F(0,1)=F_0(0,1)=w^{1-p}(0)>0$ and $\lim_{t\rightarrow\infty} F(t,1)=-\infty$ as well as  $\lim_{t\rightarrow\infty} F_0(t,1)=-\infty$. So there exists
$T_m<\infty$ such that $\lim_{t\rightarrow T_m,t<T_m}w^{1-p}(t)=0$ because $p>1$ and hence $\lim_{t\rightarrow T_m,t<T_m}w(t)=\infty$.

In the case i1) $F(T_m,1)=0$ means
$$
w^{1-p}(0)+\frac{p-1}{\Gamma(\alpha)[1-p(1-\alpha)]}=\frac{(p-1)(T_m+1)^{1-p(1-\alpha)}}{\Gamma(\alpha)[1-p(1-\alpha)]},
$$
$$
(T_m+1)^{1-p(1-\alpha)}=w^{1-p}(0)\frac{\Gamma(\alpha)[1-p(1-\alpha)]}{p-1}+1,
$$
and
\begin{equation}
\label{eqq7}
T_m=\left(w^{1-p}(0)\frac{\Gamma(\alpha)[1-p(1-\alpha)]}{p-1}-1\right)^{\frac{1}{p(1-\alpha)-1}}-1.
\end{equation}
Formulae \eqref{eqq7} is the same as in \citet{SL24} for the case i1)

In the case i2) solving the equation $F_0(T_m,1)=0$ we obtain
$$
T_m=e^{w^{1-p}(0)\frac{\Gamma(\alpha)}{p-1}}-1.
$$
In the case ii) we have that $F(T_m,\delta)=0$ if
\begin{equation}
\label{eqq9}
\frac{1}{w^{p-1}(0)}-\frac{A}{\delta^{p(1-\alpha)-1}}=\frac{A}{(T_m+\delta)^{p(1-\alpha)-1}}.
\end{equation}
With $\delta=\left(\frac{3Aw^{p-1}(0)}{2}\right)^{\frac{1}{p(1-\alpha)-1}}$, we get $\frac{1}{w^{p-1}(0)}=\frac{3A}{2\delta^{p(1-\alpha)-1}}$ and from \eqref{eqq9}
$$
\frac{A}{2\delta^{p(1-\alpha)-1}}=\frac{A}{(T_m+\delta)^{p(1-\alpha)-1}}.
$$
then $T_m=\left(2^{\frac{1}{p(1-\alpha)-1}}-1\right)\delta$ and finally
$$
T_m=\left(2^{\frac{1}{p(1-\alpha)-1}}-1\right)\left(\frac{3(p-1)w^{p-1}(0)}{2\Gamma(\alpha)[p(1-\alpha)-1]}\right)^{\frac{1}{p(1-\alpha)-1}},
$$
so $\lim_{t\rightarrow T_m, t<T_m} w(t)=\infty$.

From the comparison principle, Theorem \ref{th1}, for \eqref{eqq4} and \eqref{eqq7} it follows that
$$
v(t)>w(t), \hbox{ for }t\in[0,T_m),
$$
or equivalently
$$
u(t)\geq(t+\delta)^{\alpha-1}w(t),  \hbox{ for } t\in[0,T_m).
$$
Hence, from what is proved above in both cases i) and ii) we obtain
$$
\lim_{t\rightarrow T_m,t<T_m}u(t)=\infty.
$$

\end{proof}

\section{Time fractional parabolic problems}
\label{sec3}
\subsection{Comparison principle}
\label{sec3-2}
We consider the problem
\begin{equation}
\label{eq3-11}
D^\alpha_t w -\Delta_pw=f(x)|w|^{p-2}w, \quad \hbox{ for } t\in[0,T], x\in\Omega,
\end{equation}
where $\Omega$ is a bounded domain in $\mathbb{R}^n$ and
\begin{equation}
\label{eq3-12}
\alpha\in(0,1),\quad p>2, \hbox{ and } |f(x)|\leq C, \hbox{ for } x\in\Omega.
\end{equation}
\begin{theorem}[Comparison principle]
\label{th3-2}
Suppose
$$
u(t,x), v(t,x)\in C^0\left([0,T]; L^2(\Omega)\right)\cap L^p\left([0,T]; W_0^{1,p}\right),
$$
satisfy the inequality
$$
D^\alpha_tu-\Delta_pu-f(x)|u|^{p-2}u\leq \partial^\alpha_tv-\Delta_pv-f(x)|v|^{p-2}v, \quad \hbox{ for } t\in[0,T], x\in\Omega.
$$

If \eqref{eq3-12} holds and $u(0,x)\leq v(0,x)$ for $x\in \Omega$ then
\begin{equation}
\label{eq3-14}
u(t,x)\leq v(t,x)  \quad \hbox{ for } t\in[0,T], x\in\Omega.
\end{equation}
\end{theorem}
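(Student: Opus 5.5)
Set $z=(u-v)_+$ and test the difference of the two inequalities against $z$. Since $u(0)\le v(0)$, we have $z(0)=0$. Because $u,v\in L^p(0,T;W_0^{1,p})$, also $z(t,\cdot)\in W^{1,p}_0(\Omega)$ for a.e. $t$, so $z$ is an admissible test function after the usual Steklov-type regularization in time. Multiplying by $z$ and integrating over $\Omega$ gives
\begin{equation*}
\int_\Omega D^\alpha_t(u-v)\,z\,dx+\int_\Omega\left(|\nabla u|^{p-2}\nabla u-|\nabla v|^{p-2}\nabla v\right)\cdot\nabla z\,dx\le\int_\Omega f(x)\left(|u|^{p-2}u-|v|^{p-2}v\right)z\,dx .
\end{equation*}
The gradient term is taken over the set $\{u>v\}$ only, and there it is nonnegative by the monotonicity of $\xi\mapsto|\xi|^{p-2}\xi$ (for $p>2$ it even dominates $c\,|\nabla z|^p$), so we drop it.

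For the time term we use the fundamental identity of Zacher (Lemma 3.1 in \citet{Za19}). Its consequence is the convexity inequality: for the convex function $H(y)=\frac12 (y_+)^2$, with $H'(y)=y_+$, one has
\begin{equation*}
D^\alpha_t(u-v)\,(u-v)_+\ \ge\ D^\alpha_t\tfrac12 (u-v)_+^2 ,
\end{equation*}
where we use $H(u-v)(0)=0$. Integrating in $x$ yields $\int_\Omega D^\alpha_t(u-v)\,z\,dx\ge \frac12 D^\alpha_t\|z(t)\|_{L^2}^2$. Justifying this in the weak class $C^0([0,T];L^2)\cap L^p(W_0^{1,p})$ is the main obstacle. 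I would first regularize the kernel $k(t)=t^{-\alpha}/\Gamma(1-\alpha)$ by the Yosida approximations $k_n$, for which the identity holds pointwise, and then pass to the limit $n\to\infty$ in the time-integrated form.

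For the right-hand side, the nonlinearity $s\mapsto|s|^{p-2}s$ is only locally Lipschitz. I would therefore proceed as in Theorem \ref{th3-1} and assume $|u|,|v|\le M$, which is available for the truncated solutions. This gives
\begin{equation*}
\left|f\left(|u|^{p-2}u-|v|^{p-2}v\right)\right|z\le C(p-1)M^{p-2}z^2 .
\end{equation*}
If boundedness is not available, I would instead cut off at level $M$ and let $M\to\infty$. With $y(t)=\|z(t)\|_{L^2(\Omega)}^2$ we arrive at
\begin{equation*}
D^\alpha_t y(t)\le K\,y(t),\qquad y(0)=0,\qquad K=2C(p-1)M^{p-2}.
\end{equation*}

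Finally, apply the scalar comparison principle Theorem \ref{th3-1} with $f(t,\omega)=K\omega$: $y$ is a subsolution and $0$ is a solution with the same initial value, so $y\le 0$. Alternatively, one can repeat its proof directly: integrate in time, compare $(T_1-T_0)^{-\alpha}$ with $K\Gamma(1-\alpha)$ on short intervals, and continue step by step. Either way $y\equiv0$ on $[0,T]$, hence $z=0$ a.e., which is \eqref{eq3-14}.
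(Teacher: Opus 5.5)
Your outline follows the paper's proof step for step: test with $z=(u-v)_+$, handle the time term with Zacher's fundamental identity for a quadratic $H$, drop the monotone gradient term, reduce to $D^\alpha_t y\le Ky$, $y(0)=0$ for $y(t)=\|z(t)\|_{L^2}^2$, and conclude with Theorem~\ref{th3-1]}. Your Yosida-approximation justification of the identity is more careful than the paper's formal use of it.

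\textbf{The gap is in the reaction term.} Your Lipschitz bound needs $|u|,|v|\le M$. This is not a hypothesis of the theorem, and it does not follow from $u,v\in C^0([0,T];L^2(\Omega))\cap L^p(0,T;W^{1,p}_0(\Omega))$. This is exactly where $s\mapsto|s|^{p-2}s$, $p>2$, stops being Lipschitz. Your fallback of cutting off at level $M$ and letting $M\to\infty$ does not close:
\begin{itemize}
\item Truncating $u,v$ does not preserve the differential inequality.
\item If you instead split the reaction integral into $\{|u|+|v|\le M\}$ and its complement, you get $D^\alpha_t y\le K_My+\varepsilon_M(t)$ with $K_M=2C(p-1)M^{p-2}\to\infty$. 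The comparison bound is then of order $\varepsilon_M E_\alpha(K_Mt^\alpha)/K_M$, where $E_\alpha$ is the Mittag-Leffler function. This grows like $\exp(cM^{(p-2)/\alpha}t)$.
\item Nothing in the hypotheses makes $\varepsilon_M\lesssim\int_{\{|u|+|v|>M\}}(|u|+|v|)^p\,dx$ decay that fast.
\end{itemize}
So as written you prove the comparison principle only for bounded $u,v$.

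The paper does not supply the missing idea either. It assumes instead $\sup_t\int_\Omega|u|^p\,dx\le M$ and $\sup_t\int_\Omega|v|^p\,dx\le M$, which is also not a hypothesis. It bounds the reaction term by $K_1\|z\|_{L^p(\Omega_+)}^2$ via H\"older. It then claims, through the quotient $\theta$, that $\|z\|_{L^p}^2\le|\Omega_+|^{(p-2)/p}\|z\|_{L^2}^2$. That is H\"older's inequality reversed: H\"older gives $\|z\|_{L^2}^2\le|\Omega_+|^{(p-2)/p}\|z\|_{L^p}^2$, and $\theta$ is unbounded above (take $z$ concentrated on a small ball).

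Keeping the gradient term and using Gagliardo--Nirenberg does not repair this. For $p>2$ it only yields $D^\alpha_t y\le C\,y^\beta$ with $\beta<1$, which does not force $y\equiv0$. So your $L^\infty$ version is the sound form of this step. The honest fix is to add $u,v\in L^\infty(Q)$ to the hypotheses, not to try to remove it by a cut-off.
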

\begin{proof}
Suppose by contradiction that \eqref{eq3-14} fails. Then the function
$$
z(t,x)=\left(u(t,x)-v(t,x)\right)_+=\left\{\begin{array}{lll} &u(t,x)-v(t,x)& \hbox{ for } u(t,x)-v(t,x)>0,\\ &0& \hbox{ for }  u(t,x)-v(t,x)\leq 0,\end{array}\right.,
$$
satisfies the inequality
\begin{equation}
\label{eq3-15}
D^\alpha_tz-\Delta_pu+\Delta_pv-f(x)\left(|u|^{p-2}u-|v|^{p-2}v\right)\leq 0,
\end{equation}
for $(t,x)\in Q_+=\left\{(t,x); t\in[0,T], x\in\Omega_+(t)\right\}$ and ,
 $\Omega_+(t)=\left\{x\in \Omega, u(t,x)>v(t,x)\right\}$.
Let us recall the so called fundamental identity, proved in the next Lemma.
\begin{lemma}[Lemma 4.1 in \citet{Za19}]
Let $T>0$, $J=(0,T)$ and $U$ be an open subset of $\mathbb{R}$. Let further $k\in H^1_1(J)$, $H\in C^1(U)$, and $u\in L^1(J)$ with $u(t)\in U$  for a.a. $t\in J$.Suppose that  the functions $H(u),H'(u)u$ and $H'(u)(k\ast u)$ belong to $L^1(J)$. Then we have for a.a. $t\in J$ the equality
\begin{equation}
\label{eq3-16}
\begin{array}{l}
H'(u(t))\frac{1}{\Gamma(1-\alpha)}\frac{\partial}{\partial t}\int_0^t(t-s)^{-\alpha}(k\ast u)(s)ds
\\
=\frac{1}{\Gamma(1-\alpha)}\frac{\partial}{\partial t}\int_0^t(t-s)^{-\alpha}(k\ast H(u))(s)ds
+\left(-H(u(t))+H'(u(t))\right)k(t)
\\
+\int_0^t\left(H(u(t-s))-H(u(t))-H'(u(t))[u(t-s)-u(t)]\right)[-k(s)]ds.
\end{array}
\end{equation}
\end{lemma}
From the fundamental identity \eqref{eq3-16} for $k=g_{1-\alpha}(t)=\frac{t^{-\alpha}}{\Gamma(1-\alpha)}$ and $H(z)=z^2(t,x)$ we get
\begin{equation}
\label{eq3-17}
\begin{array}{l}
2z(t,x)\frac{1}{\Gamma(1-\alpha)}\frac{\partial}{\partial t}\int_0^t(t-s)^{-\alpha}z(s,x)ds
\\
=\frac{1}{\Gamma(1-\alpha)}\frac{\partial}{\partial t}\int_0^t(t-s)^{-\alpha}z^2(s,x)ds+\left[-z^2(t,x)+2z^2(t,x)\right]g_{1-\alpha}(t)
\\
-\int_0^t\left[z^2(t-s,x)-z^2(t,x)-2z(t,x)\left(z(t-s,x)-z(t,x)\right)\right]\frac{d}{ds}g_{1-\alpha}(s)ds
\\
=\frac{\partial}{\partial t}\int_0^t(t-s)^{-\alpha}z^2(s,x)ds+\frac{t^{-\alpha}}{\Gamma(1-\alpha)}z^2(t,x)
\\
+\frac{\alpha}{\Gamma(1-\alpha)}\int_0^ts^{-\alpha-1}\left[z(t-s,x)-z(t,x)\right]^2ds.
\end{array}
\end{equation}
Multiplying \eqref{eq3-15} with $z(t,x)$ and integrating in $\Omega$ we have
$$
\begin{array}{l}
\int_{\Omega_+(t)}z(t,x)\frac{\partial}{\partial t}\int_0^t(t-s)^{-\alpha}z^2(s,x)dsdx
\\
+\int_{\Omega_+(t)}\nabla z\left[|\nabla u|^{p-2}\nabla u-|\nabla v|^{p-2}\nabla v\right]dx
\\
-\int_{\Omega_+(t)}f(x)\left[|u|^{p-2}u-|v|^{p-2}v\right]z(t,x)dx\leq0
\end{array}
$$
Simple computation give us
$$
\begin{array}{l}
\left[|\nabla u|^{p-2}\nabla u-|\nabla v|^{p-2}\nabla v\right]\nabla z
=\frac{1}{2}\left[|\nabla v|^{p-2}+|\nabla u|^{p-2}\right]|\nabla(u-v)|^2
\\
+\frac{1}{2}\left[|\nabla u|^{p-2}+|\nabla v|^{p-2}\right]\left[|\nabla u|^2-|\nabla v|^2\right]\geq0,
\end{array}
$$
$$
0\leq\left( |u|^{p-2}u-|v|^{p-2}v\right) z
\leq(p-1)(v-u)^2\left(|v|+|u|\right)^{p-2},
$$
and hence
\begin{equation}
\label{eq3-21}
\begin{array}{l}
0\leq\int_{\Omega_+(t)}\left[|u(t,x|^{p-2}u(t,x)-|v(t,x)|^{p-2}v(t,x)\right]z(t,x)dx
\\
\leq(p-1)\left(\int_{\Omega_+(t)}|z(t,x)|^pdx\right)^\frac{2}{p}\left(\int_{\Omega_+(t)}(|v|^p+|u|^p)dx\right)^\frac{p-2}{p}
\\
\leq(p-1)2^{\frac{(p-1)(p-2)}{p}}\left(\int_{\Omega_+(t)}(|v|^p+|u|^p)dx\right)^\frac{p-2}{p}\left(\int_{\Omega_+(t)}|z(t,x)|^pdx\right)^\frac{2}{p}.
\end{array}
\end{equation}
If
$$
\sup_{t\in[0,T]}\int_\Omega|u(t,x)|^pdx\leq M, \quad \sup_{t\in[0,T]}\int_\Omega|v(t,x)|^pdx\leq M,
$$
then \eqref{eq3-21} becomes
\begin{equation}
\label{eq3-23}
\begin{array}{l}
\int_{\Omega_+(t)}\left[|u|^{p-2}u-|v|^{p-2}v\right]z(t,x)dx
\\
\leq(p-1)2^{\frac{(p-1)(p-2)}{p}}(2M)^{\frac{p-2}{p}}\left(\int_{\Omega_+(t)}|z(t,x)|^pdx\right)^{\frac{2}{p}}
\\
=(p-1)2^{p-2}M^{\frac{p-2}{p}}\left(\int_{\Omega_+(t)}|z(t,x)|^pdx\right)^{\frac{2}{p}}.
\end{array}
\end{equation}

From \eqref{eq3-17} - \eqref{eq3-23} and $z(0,x)\equiv0$ we get the inequalities
\begin{equation}
\label{eq3-24}
\begin{array}{l}
0\geq\frac{1}{2}D^\alpha_t\int_{\Omega_+(t)}z^2(t,x)dx-\frac{1}{2}D^\alpha_t\int_{\Omega_+(0)}z^2(0,x)dx
\\
+\frac{1}{2\Gamma(1-\alpha)}\int_{\Omega_+(t)}t^{-\alpha}z^2(t,x)dx
\\
+\frac{\alpha}{2\Gamma(1-\alpha)}\int_{\Omega_+(t)}\int_0^ts^{-\alpha-1}\left[z(t-s,x)-z(t,x)\right]^2dsdx
\\
+\int_{\Omega_+(t)}\left[|\nabla u|^{p-2}\nabla u-|\nabla v|^{p-2}\nabla v\right]\nabla zdx
\\
-C\int_{\Omega_+(t)}\left[|u|^{p-2}u-|v|^{p-2}v\right]zdx
\\
\geq \frac{1}{2}D^\alpha_t\int_{\Omega_+(t)}z^2(t,x)dx+\frac{1}{2\Gamma(1-\alpha)}\int_{\Omega_+(t)}t^{-\alpha}z^2(t,x)dx
\\
+\frac{\alpha}{2\Gamma(1-\alpha)}\int_{\Omega_+(t)}\int_0^ts^{-\alpha-1}\left[z(t-s,x)-z(t,x)\right]^2dsdx
\\
+\frac{1}{2}\int_{\Omega_+(t)}\left[\left(|\nabla u|^{p-2}+|\nabla v|^{p-2}\right)|\nabla z|^2\right.
\\
\left.+\left(|\nabla u|^{p-2}-|\nabla v|^{p-2}\right)\left(|\nabla u|^2-|\nabla v|^2\right)\right]dx
\\
-C(p-1)2^{p-2}M^{\frac{p-2}{p}}\left(\int_{\Omega_+(t)}|z(t,x)|^pdx\right)^{\frac{2}{p}}
\\
\geq\frac{1}{2}D^\alpha_t\int_{\Omega_+(t)}z^2(t,x)dx-K_1\left(\int_{\Omega_+(t)}|z(t,x)|^pdx\right)^{\frac{2}{p}},
\end{array}
\end{equation}
where
\begin{equation}
\label{eq3-25}
K_1=C(p-1)2^{p-2}M^{\frac{p-2}{p}}.
\end{equation}
Since
\begin{equation}
\label{eq3-26}
\begin{array}{l}
\left(\int_{\Omega_+(t)}|z(t,x)|^pdx\right)^{\frac{2}{p}}=\int_{\Omega_+(t)}z^2(t,x)dx\left(\int_{\Omega_+(t)}|z(t,x)|^pdx\right)^{\frac{2}{p}}
\\
\times\left(\int_{\Omega_+(t)}|z(t,x)|^2dx\right)^{-1}=\theta\int_{\Omega_+(t)}|z(t,x)|^2dx
\end{array}
\end{equation}
where $\theta(z)=\left(\int_{\Omega_+(t)}|z(t,x)|^pdx\right)^{\frac{2}{p}}\left(\int_{\Omega_+(t)}|z(t,x)|^2dx\right)^{-1}$, is a homogeneous function of 0-th order, i.e. $\theta(\lambda z)=\theta(z)$ for every $\lambda\neq0$ we have
$$
0\leq\theta(z)\leq C_1=|\Omega_+(\delta)|^{\frac{p-2}{p}}.
$$
Here $C_1=\theta(\delta)$ for some $\delta\in(0,T]$ such that $\Omega_+(\delta)\neq0$.

Thus, from \eqref{eq3-25}, \eqref{eq3-26}, inequality \eqref{eq3-24} becomes for $w(t)=\int_{\Omega_+(t)}|z(t,x)|^2dx$,
$$
D^\alpha_tw(t)\leq2K_1C_1w(t) \hbox{ for } t\in[0,T], w(0)=0.
$$
Since
\begin{equation}
\label{eq3-29}
0\leq\int_{\Omega_+(t)}z^2(t,x)dx\leq\left(\int_{\Omega_+(t)}|z|^pdx\right)^{\frac{2}{p}}\left(\int_{\Omega_+(t)}1dx\right)^{\frac{p-2}{p}}\leq|\Omega_+(t)|^{\frac{p-2}{p}}M,
\end{equation}
and \eqref{eq3-12} holds, from Theorem \ref{th3-1} for $u(t)=w(t), v(t)=0$, it follows that $w(t)\leq0$, i.e. $z(t,x)=0$ for $t\in[0,T]$, $x\in\Omega$ which contradicts our assumption.
\end{proof}
\subsection{Apriori estimates}
\label{sec3-3}
In this section we consider equation \eqref{eq3-11} with initial and boundary data
\begin{equation}
\label{eqq29}
\left\{\begin{array}{ll}
&D^\alpha_t u_N-\Delta_pu=f_N(x)|u_N|^{p-2}u_N, \quad \hbox{ for } t\in[0,T], x\in\Omega,
\\
&u_N(0,x)=u_0(x), \hbox{ for } x\in \Omega,
\\
&u_N(t,x)=0, \hbox{ for } t\in[0,T], x\in \partial\Omega,
\end{array}\right.
\end{equation}
where $\alpha\in(0,1)$, $n>p>2$,  $f_N(x)=\mu W_N(x)=\mu \min\{N, W(x)\}, N \in \mathbb{N}$.
Here $W(x)$ is defined in \eqref{eq3} and $\mu<\Lambda(n,p)$.
\begin{theorem}
\label{th3}
There exist constants $A_1$, $A_2$ and the weak solution $u_N(t,x)$ of \eqref{eqq29} satisfies the estimates
\begin{equation}
\label{eq3-40a}
\int_0^T\int_\Omega |\nabla u_N(t,x)|^pdtdx\leq A_1, \quad \int_0^T\int_\Omega |u_N(t,x)|^pdtdx\leq A_2.
\end{equation}
\end{theorem}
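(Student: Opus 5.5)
We argue by an energy estimate: test \eqref{eqq29} with $u_N$, use the fractional coercivity of $D^\alpha_t$ coming from the fundamental identity \eqref{eq3-16}, and absorb the potential term with the Hardy inequality \eqref{eq3a}. The constants must be independent of $N$ (and of $T$ where possible), since the estimates feed the limit $N\to\infty$ in the proof of Theorem \ref{th1-1}.

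\emph{Step 1 (testing).} Multiply the equation by $u_N$ and integrate over $\Omega$. Apply \eqref{eq3-16} with $k=g_{1-\alpha}$ and $H(z)=z^2$, exactly as in \eqref{eq3-17}, but now with $u_N-u_0$ in place of $z$, since $u_N(0)=u_0\neq0$. Dropping the nonnegative terms on the right of \eqref{eq3-17} gives the pointwise inequality $u_N D^\alpha_t u_N \geq \frac12 D^\alpha_t (u_N^2)$ in the Caputo sense. Hence
\[
\frac12 D^\alpha_t\int_\Omega u_N^2dx+\int_\Omega|\nabla u_N|^pdx\leq \mu\int_\Omega W_N|u_N|^pdx .
\]

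\emph{Step 2 (Hardy absorption).} Since $W_N\leq W$, \eqref{eq3a} gives
\[
\mu\int_\Omega W_N|u_N|^pdx\leq \frac{\mu}{\Lambda(n,p)}\int_\Omega|\nabla u_N|^pdx .
\]
Because $\mu<\Lambda(n,p)$, we obtain with $\varepsilon=1-\mu/\Lambda(n,p)>0$:
\[
\frac12 D^\alpha_t\int_\Omega u_N^2dx+\varepsilon\int_\Omega|\nabla u_N|^pdx\leq0 .
\]
If $\mu\le 0$ the potential term has a good sign and this step is trivial.

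\emph{Step 3 (integrating in time).} Write $y(t)=\int_\Omega u_N^2dx$ and apply the Riemann--Liouville integral $I^\alpha$, i.e. convolution with $g_\alpha$, using $I^\alpha D^\alpha_t y=y-y(0)$. This gives
\[
y(t)+2\varepsilon\, I^\alpha\Big(\int_\Omega|\nabla u_N|^pdx\Big)(t)\leq \|u_0\|_{L^2}^2 .
\]
Since $g_\alpha(t-s)\geq T^{\alpha-1}/\Gamma(\alpha)$ for $0\le s\le t\le T$, evaluating at $t=T$ yields
\[
\int_0^T\int_\Omega|\nabla u_N|^pdxdt\leq \frac{\Gamma(\alpha)T^{1-\alpha}}{2\varepsilon}\|u_0\|_{L^2}^2=:A_1 .
\]
The second estimate in \eqref{eq3-40a} follows from the Poincar\'e inequality in $W_0^{1,p}(\Omega)$ (or from Hardy again, since $W\geq c>0$ on the bounded $\Omega$), giving $A_2=C_P A_1$. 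As a by-product we also get the uniform bound $\sup_t\|u_N(t)\|_{L^2}\leq\|u_0\|_{L^2}$.

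\emph{Main obstacle.} The delicate step is the rigorous justification of Step 1 for weak solutions: the inequality $u D^\alpha_t u\ge \frac12 D^\alpha_t u^2$ requires some regularity of $u_N$ in time. I would handle this by working with the Yosida-type regularization of the kernel $g_{1-\alpha}$ (as in Zacher's approach), deriving the inequality for the regularized problem via \eqref{eq3-16}, and passing to the limit. All bounds are independent of the regularization and of $N$, because $W_N$ enters only through $W_N\le W$.
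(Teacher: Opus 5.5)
Your argument is correct. It has the paper's skeleton: test with $u_N$, apply the fundamental identity \eqref{eq3-16} with $H(z)=z^2$, absorb the potential by Hardy's inequality using $\mu<\Lambda(n,p)$, and integrate in time. Two steps, however, are done differently, and to your advantage.

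First, the initial datum. The paper applies \eqref{eq3-16} to $u_N$ in Riemann--Liouville form. This leaves a cross term $g_{1-\alpha}(t)\int_\Omega u_Nu_0\,dx$, which it controls by Young's inequality, producing a source $-c\,g_{1-\alpha}(t)\|u_0\|_{L^2}^2$. You apply the identity to $u_N-u_0$ and add back $2u_0D^\alpha_tu_N=D^\alpha_t(2u_0u_N)$. This gives $2u_ND^\alpha_tu_N\ge D^\alpha_t(u_N^2)$ with no source term; in fact the paper's extra terms combine to $g_{1-\alpha}(t)(u_N-u_0)^2\ge0$, so Young's inequality is unnecessary.

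Second, the time integration. The paper integrates $dt$ over $(0,T)$ and discards $(g_{1-\alpha}\ast\|u_N\|_{L^2}^2)(T)\ge0$. It handles the term at $t=0$ through \eqref{eq3-29}, i.e.\ through a bound on $\sup_t\int_\Omega|u_N|^p\,dx$ that is never established; continuity of $u_N$ in $L^2$ would have sufficed there. Your convolution with $g_\alpha$ inverts $D^\alpha_t$. Besides $A_1$, it yields the $N$-independent bound $\sup_t\|u_N(t)\|_{L^2}\le\|u_0\|_{L^2}$.

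For $A_2$ you use Poincar\'e, whereas the paper splits $\mu=(\mu+\varepsilon)-\varepsilon$ and uses $W_N\ge\omega_0$ (valid for $N\ge\omega_0$). Both routes give constants of order $T^{1-\alpha}\|u_0\|_{L^2}^2$, independent of $N$.

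Finally, you note that $g_{1-\alpha}\notin W^{1,1}(0,T)$, so \eqref{eq3-16} must be applied to regularized kernels. This addresses a rigor point the paper passes over.
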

\begin{proof}
From \eqref{eq3-11}, \eqref{eq3-12} and \eqref{eq3-17} for $z=u_N(t,x)$ we get
\begin{equation}
\label{eq3-33}
\begin{array}{l}
0=2u_N(t,x)\frac{\partial}{\partial t}\frac{1}{\Gamma(1-\alpha)}\int_0^t(t-s)^{-\alpha}u_N(s,x)ds
\\
-2u_N(t,x)\frac{\partial}{\partial t}\frac{1}{\Gamma(1-\alpha)}\int_0^t(t-s)^{-\alpha}u_0(x)ds-2u(t,x)\Delta_pu_N(t,x)
\\
+2u_N(t,x)D^\alpha_tu_0(x)-2u_N(t,x)\mu W_N(x)|u_N(t,x|^{p-2}u_N(t,x)
\\
=D^\alpha_tu^2_N(t,x)+2u^2_N(t,x)\frac{\partial}{\partial t}\frac{1}{\Gamma(1-\alpha)}\int_0^t(t-s)^{-\alpha}u^2_0(x)ds
\\
+\frac{t^{-\alpha}}{\Gamma(1-\alpha)}u^2_N(t,x)+\frac{\alpha}{\Gamma(1-\alpha)}\int_0^ts^{-\alpha-1}\left[u_N(t-s,x)-u_N(t,x)\right)^2ds
\\
-2u_N(t,x)\frac{\partial}{\partial t}\frac{1}{\Gamma(1-\alpha)}\int_0^t(t-s)^{-\alpha}u_0(x)ds
\\
-2u_N(t,x)\Delta_pu_N(t,x)-2u(t,x)\mu W_N(x)|u_N(t,x|^{p-2}u_N(t,x).
\end{array}
\end{equation}
Integrating \eqref{eq3-33} in $\Omega$ we obtain
\begin{equation}
\label{eq3-34}
\begin{array}{l}
0=D_t^\alpha\int_\Omega u^2_N(t,x)dx+\frac{t^{-\alpha}}{\Gamma(1-\alpha)}\int_\Omega u^2_N(t,x)dx
\\
+\frac{\alpha}{\Gamma(1-\alpha)}\int_0^t\int_\Omega s^{-\alpha-1}\left[u_N(t-s,x)-u_N(t,x)\right]^2dsdx
\\
+2\int_\Omega|\nabla u_N(t,x)|^pdx-2\mu\int_\Omega W_N(x)|u_N(t,x)|^pdx
\\
+\frac{\partial}{\partial t}\frac{1}{\Gamma(1-\alpha)}\int_\Omega\int_0^t(t-s)^{-\alpha}u^2_0(x)dsdx
\\
-2u_N(t,x)\frac{\partial}{\partial t}\frac{1}{\Gamma(1-\alpha)}\int_\Omega\int_0^t(t-s)^{-\alpha}u_0(x)dsdx
\end{array}
\end{equation}
Since $\frac{\partial}{\partial t}\frac{1}{\Gamma(1-\alpha)}\int_0^t(t-s)^{-\alpha}u^j_0(x)ds=\frac{t^{-\alpha}}{\Gamma(1-\alpha)}u_0^j(x)$, $j=1,2$  and
$$
\begin{array}{l}
2\left|\int_\Omega\frac{u_N(t,x)}{\Gamma(1-\alpha)}\frac{\partial}{\partial t}\int_0^t(t-s)^{-\alpha}u_0(x)dxds\right|= \frac{2t^{-\alpha}}{\Gamma(1-\alpha)}\left|\int_\Omega u_N(t,x)u_0(x)dx\right|
\\
\leq\frac{t^{-\alpha}}{2\Gamma(1-\alpha)}\int_\Omega u^2_N(t,x)dx+\frac{2t^{-\alpha}}{\Gamma(1-\alpha)}\int_\Omega u^2_0(x)dx
\end{array}
$$
we get from \eqref{eq3-34} and Hardy inequalities \eqref{eq3a}
\begin{equation}
\label{eq3-35}
\begin{array}{l}
0\geq D^\alpha_t\int_\Omega u^2_N(t,x)dx+\frac{t^{-\alpha}}{2\Gamma(1-\alpha)}\int_\Omega u^2_N(t,x)dx+2\int_\Omega|\nabla u_N(t,x)|^pdx
\\
-2(\mu+\varepsilon)\Lambda^{-1}(n,p)\int_\Omega|\nabla u_N(t,x)|^pdx
\\
+2\varepsilon\int_\Omega W_N(x)|u_N(t,x)|^pdx-\frac{3t^{-\alpha}}{\Gamma(1-\alpha)}\int_\Omega u_0^2(x)dx,
\end{array}
\end{equation}
where
$$
0<\varepsilon<\min\left(\omega_0, \Lambda(n,p)-\mu\right), \quad \omega_0=\min_\Omega W(x)>0.
$$
Integrating \eqref{eq3-35} on $t$ from $0$ to $T $ we have
\begin{equation}
\label{eq3-37}
\begin{array}{l}
0\geq\int_0^T\frac{\partial}{\partial t}\int_0^t\frac{(t-s)^{-\alpha}}{\Gamma(1-\alpha)}\int_\Omega u^2_N(s,x)dsdxdt
\\
+\gamma\int_0^T\int_\Omega|\nabla u_N(t,x)|^pdxdt+2\varepsilon\omega_0\int_\Omega|u_N(t,x)|^pdxdt
\\
+\frac{1}{2\Gamma(1-\alpha)}\int_0^Tt^{-\alpha}\int_\Omega u^2_N(t,x)dxdt-\frac{3}{\Gamma(1-\alpha)}\int_0^Tt^{-\alpha}\int_\Omega u^2_0(x)dxdt,
\end{array}
\end{equation}
where $\gamma=2\left[1-(\mu+\varepsilon)C^{-1}\right]>0$.

From\eqref{eq3-29} it follows that
$$
0\leq\int_\Omega u^2_N(t,x)dx\leq M|\Omega|^\frac{p-2}{p}=C_2<\infty \hbox{ fof } t\in[0,T],
$$
and hence
$$
\begin{array}{l}
0\leq\lim_{t\rightarrow0}\int_0^t\frac{(t-s)^{1-\alpha}}{\Gamma(1-\alpha)}\int_\Omega u^2_N(t,x)dsdx
\\
\leq \frac{C_2}{\Gamma(1-\alpha)}\lim_{t\rightarrow0}\int_0^t(t-s)^{-\alpha}ds=\frac{C_2}{(1-\alpha)\Gamma(1-\alpha)}\lim_{t\rightarrow0}t^{1-\alpha}=0.
\end{array}
$$
Thus \eqref{eq3-37} becomes
$$
\begin{array}{l}
0\geq\int_0^T\frac{(T-s)^{1-\alpha}}{\Gamma(1-\alpha)}\int_\Omega u^2_N(s,x)dsdx+\gamma\int_0^T\int_\Omega|\nabla u_N(t,x)|^pdxdt
\\
+\frac{1}{2\Gamma(1-\alpha)}\int_0^Tt^{-\alpha}\int_\Omega u^2_N(t,x)dxdt+2\varepsilon\omega_0\int_0^T\int_\Omega |u_N(t,x)|^pdxdt
\\
-\frac{T^{1-\alpha}}{(1-\alpha)\Gamma(1-\alpha)}\int_\Omega u_0^2(x)dx
\\
\geq\gamma\int_0^T\int_\Omega|\nabla u_N(t,x)|^pdxdt+2\varepsilon\omega_0\int_0^T\int_\Omega |u_N(t,x)|^pdxdt
\\
-\frac{3T^{1-\alpha}}{(1-\alpha)\Gamma(1-\alpha)}\int_\Omega u_0^2(x)dx.
\end{array}
$$
and hence we obtain \eqref{eq3-40a} with $A_1=\frac{C_3}{\gamma}$, $A_2=\frac{C_3}{2\varepsilon\omega_0\Gamma(1-\alpha)}$
and $C_3=\frac{3T^{1-\alpha}}{1-\alpha}\int_\Omega u_0^2(x)dx$.
\end{proof}
\section{Proof of the main results}
\label{sec4}
In this section we prove global boundedness or finite time blow-up of the solutions in distributional sense of the singular fractional parabolic problem \eqref{eq1}
\begin{proof}[Proof of theorem \ref{th1-1}]
From the appriori estimate \eqref{eq3-40a} it follows that the solution $u_N(t,x)$ of problem \eqref{eq4} satisfies \eqref{eq4a}. After the limit $N\rightarrow\infty$ in \eqref{eq3-40a} by means of Theorem 4.1 in \citet{BM92} the limit solution $u(t,x)$ fulfilled \eqref{eq4a} and does not blow-up at $T_m$.
\end{proof}
\begin{proof}[Proof of theorem \ref{th1-2}]
We suppose by contradiction that $u(t,x)$ is globally defined for every $t>0$. Hence $u_N(t,x)$ is globally defined for every $t>0$. If $\lambda_N$ is the first eigenvalue of the problem
$$
\left\{\begin{array}{l}-\Delta_p v(x)=\lambda_NW_N(x)|v(x)|^{p-2}v(x)  \hbox{ for } x\in \Omega,\\ v(x)=0 \hbox{ for } x\in \partial\Omega \end{array}\right.,
$$
then from  Lemma 9.2 in \citet{KR22} we have
$$
\lambda_N\geq\Lambda(n,p), \quad \lim_{N\rightarrow\infty}\lambda_N=\Lambda(n,p),
$$
and problem
$$
\left\{\begin{array}{l} -\Delta_pX(x)-\mu W_N(x)|X(x)|^{p-2}X(x)=-X(x)  \hbox{ for } x\in \Omega,
\\
 X(x)=0 \hbox{ for } x\in \partial\Omega
\end{array}\right.,
$$
has a positive solution.

From Theorem \ref{th2-3} problem
$$
\left\{\begin{array}{l} D^\alpha_tT(t)=T^{p-1}(t), t>0,
\\T(0)=T_0>0 \end{array}\right.,
$$
has a positive solution which blows-up for finite time $T_{max}$. We chose $T_0=\varepsilon>0$ where $\varepsilon$ is small enough constant such that
$$
\varepsilon X(x)<u_0(x) \hbox{ for } x\in\Omega.
$$
Since $v(t,x)=T(t)X(x)$ is a positive subsolution of problem \eqref{eq4} for $t\in(0, T_{max})$ from the comparison principle Theorem \ref{th1} it follows that
$$
u_N(t,x)\geq T(t)X(x), \hbox{ for } x\in\Omega, t>0.
$$
Hence $u_N(t,x)$ blows-up at $t=T_{max}$ and the solution $u(t,x)=\lim_{N\rightarrow\infty}u_N(t,x)$ of \eqref{eq1} blows-up at $t=T_{max}$.
\end{proof}


\begin{flushleft}
Institute of Mathematics and  Informatics,\\ Bulgarian Academy
of Sciences \\ Acad. G. Bonchev str.,bl. 8\\
Sofia 1113, Bulgaria, \\ E-mail: kutev@math.bas.bg; rangelov@math.bas.bg,
\end{flushleft}

\end{document}